\documentstyle[amscd,amssymb,verbatim,epsf]{amsart}
\linespread{1.5}

\begin{document}

\newtheorem{theorem}{Theorem}[section]
\newtheorem{algorithm}[theorem]{Algorithm}
\newtheorem{axiom}[theorem]{Axiom}
\newtheorem{case}[theorem]{Case}
\newtheorem{claim}[theorem]{Claim}
\newtheorem{conclusion}[theorem]{Conclusion}
\newtheorem{condition}[theorem]{Condition}
\newtheorem{conjecture}[theorem]{Conjecture}
\newtheorem{corollary}[theorem]{Corollary}
\newtheorem{criterion}[theorem]{Criterion}
\newtheorem{definition}[theorem]{Definition}
\newtheorem{example}[theorem]{Example}
\newtheorem{exercise}[theorem]{Exercise}
\newtheorem{lemma}[theorem]{Lemma}
\newtheorem{notation}[theorem]{Notation}
\newtheorem{problem}[theorem]{Problem}
\newtheorem{proposition}[theorem]{Proposition}
\newtheorem{remark}[theorem]{Remark}
\newtheorem{solution}[theorem]{Solution}
\newtheorem{proof}[theorem]{Proof}
\newtheorem{summary}[theorem]{Summary}
\renewcommand{\thenotation}{}

\errorcontextlines=0
\numberwithin{equation}{section}
\renewcommand{\rm}{\normalshape}%

\title[A new study on the strongly lacunary quasi Cauchyness]%
   {A new study on the strongly lacunary quasi Cauchyness}

\author[Huseyin Kaplan and Huseyin Cakalli]{Huseyin Kaplan* and Huseyin Cakalli** \\*Omer Halisdemir University, Department of Mathematics, Faculty of Science and Letters, Nigde, Turkey, Phone:(+90388)2254096 \\**Maltepe University, Graduate School of Science and Engineering, Istanbul, Turkey, Phone:(+90216)6261050 ext:2311,  fax:(+90216)6261113}

\address{Huseyin Kaplan\\
    Omer Halisdemir University, Department of Mathematics, Faculty of Science and Letters, Nigde, Turkey Phone:(+90388)2254096}
\email{hkaplan@@nigde.edu.tr}

\address{Huseyin Cakalli \\
          Maltepe University, Graduate School of Science and Engineering, Marmara E\u{g}\.{I}t\.{I}m K\"oy\"u, TR 34857, Maltepe, \.{I}stanbul-Turkey \; \; \; \; \; Phone:(+90216)6261050 ext:2311, \;  fax:(+90216)6261113}
\email{huseyincakalli@@maltepe.edu.tr; hcakalli@@gmail.com}

\keywords{summability,  strongly lacunary  convergence, quasi-Cauchy sequences, boundedness, continuity}
\subjclass[2010]{Primary: 40A05  Secondaries: 26A15, 40A30}
\date{\today}


\maketitle

\begin{abstract}

In this paper, the concept of an  $N_{\theta}^{2}$ quasi-Cauchy sequence is introduced. We proved interesting theorems related to  $N_{\theta}^{2}$-quasi-Cauchy sequences. A real valued function $f$ defined on a subset $A$ of $\mathbb{R}$, the set of real numbers, is  $N_{\theta}^{2}$ ward continuous on $A$ if it preserves  $N_{\theta}^{2}$  quasi-Cauchy sequences of points in $A$,  i.e.  $(f( \alpha_{k}))$ is an $N_{\theta}^{2}$  quasi-Cauchy sequence whenever $(\alpha_{k})$ is an $N_{\theta}^{2}$  quasi-Cauchy sequences of points in $A$, where a sequence $(\alpha_{k})$ is called $N_{\theta}^{2}$  quasi-Cauchy if $(\Delta^{2} \alpha_{k})$ is an $N_{\theta}$  quasi-Cauchy sequence where $\Delta^{2}\alpha_{k}=\alpha_{k+2}-2\alpha_{k+1}+\alpha_{k}$ for each positive integer $k$.
\end{abstract}

\maketitle

\section{Introduction}

The concept of continuity and any concept involving continuity play a very important role not only in pure mathematics but also in other branches of sciences involving mathematics especially in computer science, information theory, economics, and biological science.

Buck  (\cite{BuckSolutionofproblem4216AmerMathMonthly}) introduced Cesaro continuity in 1946. Thereafter, a number of authors Posner (\cite{PosnerSummabilitypreservingfunctions}), Antoni and Salat (\cite{AntoniandSalatOntheAcontinuityofrealfunctionsActaMathUnivComenian}), Spigel and Krupnik (\cite{SpigelandKrupnikOntheAcontinuityofrealfunctionsJAnal}) have studied $A$-continuity defined by a regular summability matrix $A$. Some authors,
Das and Sava\c{s} (\cite{DasandSavasOntheAcontinuityofrealfunctionsIstanbulUnivFenFakMatDerg}), Borsik and Salat (\cite{BorsikandSalatOnFcontinuityofrealfunctions}) have studied $A$-continuity for methods of almost convergence or for related methods. Connor and Grosse-Erdman (\cite{ConnorandGrosseErdmannSequentialdefinitionsofcontinuityforrealfunctionsRockyMountainJMath}) have given sequential definitions of continuity for real functions calling $G$-continuity instead of $A$-continuity by means of a sequential method, or a method of sequential convergence, and their results cover the earlier works related to $A$-continuity where a method of sequential convergence, or briefly a method, is a linear function $G$ defined on a linear subspace of all sequences of points in $\mathbb{R}$ denoted by $c_{G}$, into $\mathbb{R}$. A sequence $\boldsymbol{\alpha}=(\alpha_{k})$ is said to be $G$-convergent to $\ell$ if $\boldsymbol{\alpha}\in c_{G}$ ,then $G(\boldsymbol{\alpha})=\ell$. In particular, $\lim$ denotes the limit function $\lim \boldsymbol{\alpha}=\lim_{k} \alpha_{k}$ on the linear space $c$, where $c$ denotes the space of convergent sequences. On the other hand, \c{C}akall\i\; has introduced a generalization of compactness (\cite{CakalliSequentialdefinitionsofcompactness}), a generalization of connectedness (\cite{CakalliSequentialdefinitionsofconnectedness}), via a method of sequential convergence (see also \cite{CakalliandMucukconnectednessviaasequentialmethod} and \cite{MucukandSahanOnGsequentialcontinuity}).

In recent years, using the same idea, many kinds of continuities were introduced and investigated, not all but some of them we state in the following: slowly oscillating continuity (\cite{CakalliSlowlyoscillatingcontinuity}), \cite{Vallin}), quasi-slowly oscillating continuity (\cite{CanakandDik}), $\Delta$-quasi-slowly oscillating continuity  (\cite{CakalliandCanakandDik}, and  \cite{CakalliOnDeltaquasislowlyoscillatingsequences}), ward continuity (\cite{CakalliForwardcontinuity}, \cite{BurtonandColemanQuasiCauchySequences}),  $\delta$-ward continuity (\cite{CakalliDeltaquasiCauchysequences}),  $\delta^{2}$-ward continuity (\cite{BrahaandCakalliAnewtypecontinuityforrealfunctions}),  statistical ward continuity (\cite{CakalliStatisticalwardcontinuity}, \cite{CakalliStatisticalquasiCauchysequences}), lacunary statistical ward continuity  (\cite{CakalliandArasandSonmezLacunarystatisticalwardcontinuity}), $\rho$-statistically ward continuity (\cite{CakalliAvariationonstatisticalwardcontinuity}), $\lambda$-statistically ward continuity (\cite{CakalliandSonmezAraslambdastatisticallywardcontinuity}), and  $N_\theta-$ward continuity (\cite{CakalliNthetawardcontinuity}, \cite{CakalliandKaplanAstudyonNthetaquasiCauchysequences}, \cite{CakalliandKaplanAvariationonstronglylacunarywardcontinuityJournalofMathematicalAnalysis} \cite{KaplanandCakalliVariationsonstronglylacunaryquasiCauchysequencesAIP}, \cite{KaplanandCakalliVariationsonstronglacunaryquasiCauchysequencesJNonlinearSciAppl} . Investigation of some of these kinds of continuities lead some authors to find conditions on the domain of a function for some theorems related to uniform continuity of a real function in terms of sequences in the above manner (\cite[Theorem 8]{Vallin}, \cite[Theorem 7]{CakalliForwardcontinuity}, and \cite[Theorem 1]{BurtonandColemanQuasiCauchySequences}).

The notion of strongly lacunary or $N_\theta$ convergence was introduced, and studied by Freedman, Sember, and M. Raphael in \cite{FreedmanandSemberandRaphaelSomecesarotypesummabilityspaces} in the sense that a sequence $(\alpha_{k})$ of points in $\mathbb{R}$ is $N_\theta$ convergent to an $L\in{\mathbb{R}}$, which is denoted by $N_{\theta}-\lim \alpha_{k}=L$,  if $\lim_{r\rightarrow\infty}\frac{1}{h_{r}}\sum^{}_{k\in{I_{r}}}|\alpha_{k}-L|=0$, where $I_{r}=(k_{r-1},k_{r}]$, and $k_{0}\neq 0$, $h_{r}:k_{r}-k_{r-1}\rightarrow \infty$ as $r\rightarrow\infty$ and $\theta=(k_{r})$ is an increasing sequence of positive integers. Throughout this paper, it is assumed that $\liminf_{r}\frac{k_{r}}{k_{r-1}}>1$. The sums of the form $\sum^{k_{r}}_{k_{r-1}+1}|\alpha_{k}|$ frequently occur, and will often be written for convenience as $\sum^{}_{k\in{I_{r}}}|\alpha_{k}|$.

The purpose of this paper is to investigate the notion of $N_{\theta}^{2}$  ward continuity and  prove interesting theorems.

\maketitle

\section{$N_{\theta}^{2}$-ward continuity}
A function defined on a subset $A$ of $\mathbb{R}$  is called strongly lacunary continuous or $N_{\theta}$ continuous if it preserves $N_{\theta}$ convergent sequences of points in $A$, i.e. $(f(\alpha_{k}))$ is $N_{\theta}$ convergent whenever $(\alpha_{k})$ is an $N_{\theta}$ convergent sequence of points in $A$. A function defined on a subset $A$ of $\mathbb{R}$  is strongly lacunary continuous if and only if it is ordinary continuous.

A function defined on a subset $A$ of $\mathbb{R}$  is called strongly lacunary ward continuous or $N_{\theta}$-ward continuous if it preserves $N_{\theta}$-quasi-Cauchy sequences of points in $A$, i.e. $(f(\alpha_{k}))$ is $N_{\theta}$-quasi-Cauchy whenever $(\alpha_{k})$ is an $N_{\theta}$-quasi-Cauchy sequence of points in $A$ (see \cite{CakalliandKaplanAstudyonNthetaquasiCauchysequences}), where a sequence $(\alpha _{k})$ of points in $\mathbb{R}$ is called strongly lacunary quasi-Cauchy, or  $N_\theta$-quasi-Cauchy if $(\Delta \alpha _{k})$ is $N_\theta$-convergent to $0$ (\cite{CakalliNthetawardcontinuity, CakalliandKaplanAstudyonNthetaquasiCauchysequences}). A sequence $(\alpha _{k})$ of points in $\mathbb{R}$ is called strongly lacunary $\delta$-quasi-Cauchy, or $N_{\theta}$-$\delta$-quasi-Cauchy  if $(\Delta \alpha_{k})$ is an $N_\theta$ quasi-Cauchy sequence, i.e.
\[
\lim_{r\rightarrow\infty}\frac{1}{h_{r}}\sum^{}_{k\in{I_{r}}}|\Delta^{2}\alpha_{k}|=0
\]
where $\Delta^{2}\alpha_{k}=\alpha_{k+2}-2\alpha_{k+1}+\alpha_{k}$ for each positive integer $k$. $\Delta N_{\theta}$ and $\Delta^{2} N_{\theta}$   will denote the set of $N_{\theta}$-quasi-Cauchy sequences of points in $\mathbb{R}$, and the set of $N_{\theta}$-$\delta$-quasi-Cauchy sequences of points in $\mathbb{R}$, respectively.

\begin{definition}A sequence $(\alpha _{k})$ of points in $\mathbb{R}$ is called $N_{\theta}^{2}$-quasi-Cauchy, or strongly lacunary $\delta^{2}$-quasi-Cauchy  if $(\Delta^{2}(\alpha_{k}))$ is an $N_\theta$ quasi-Cauchy sequence, i.e.
\[
\lim_{r\rightarrow\infty}\frac{1}{h_{r}}\sum^{}_{k\in{I_{r}}}|\Delta^{3}\alpha_{k}|=0
\]
where $\Delta^{3}\alpha_{k}=\alpha_{k+3}-3\alpha_{k+2}+3\alpha_{k+1}-\alpha_{k}$ for each positive integer $k$.
\end{definition}

The sum of two $N_{\theta}^{2}$-quasi-Cauchy sequences is $N_{\theta}^{2}$-quasi-Cauchy, $(\lambda \alpha _{k})$ is $N_{\theta}^{2}$-quasi-Cauchy for every constant $\lambda\in{\mathbb{R}}$. Therefore the set of $N_{\theta}^{2}$-quasi-Cauchy sequences becomes a vector space that includes the vector space of convergent sequences, i.e. the vector space of all convergent sequences is a vector subspace of the vector space of all $N_{\theta}^{2}$-quasi-Cauchy sequences.
On the other hand, the vector space of all convergent sequences is also a vector subspace of the vector space of all $N_{\theta}$-quasi-Cauchy sequences, and the vector space of all $N_{\theta}$-quasi-Cauchy sequences is a vector subspace of the vector space of all $N_{\theta}^{2}$-quasi-Cauchy sequences.

Now we give some interesting examples that show importance of the interest.

\begin{example} Let $n$ be a positive integer. In a group of $n$ people, each person selects at random and simultaneously another person of the group. All of the selected persons are then removed from the group, leaving a random number $n_{1} < n$ of people which form a new group. The new group then repeats independently the selection and removal thus described, leaving $n_{2} < n_{1}$ persons, and so forth until either one person remains, or no persons remain. Denote by $\alpha_n$ the probability that, at the end of this iteration initiated with a group of $n$ persons, one person remains. Then the sequence $(\alpha_{1}, \alpha_{2}, · · ·, \alpha_{n},...)$  is an $N_{\theta}^{2}$  quasi-Cauchy sequence (see also \cite{WinklerMathematicalPuzzles}).
\end{example}

\begin{example} In a group of $k$ people, $k$ is a positive integer, each person selects independently and at random one of three subgroups to which to belong, resulting in three groups with random numbers $k_{1}$, $k_{2}$, $k_{3}$ of members; $k_{1} + k_{2} + k_{3} = k$. Each of the subgroups is then partitioned independently in the same manner to form three sub subgroups, and so forth. Subgroups having no members or having only one member are removed from the process. Denote by $\alpha_{k}$ the expected value of the number of iterations up to complete removal, starting initially with a group of $k$ people. Then the sequence $(\alpha_{1}, \frac{\alpha_{2}}{2}, \frac{\alpha_{3}}{3},...,\frac{\alpha_{n}}{n},...)$ is a bounded non-convergent $N_{\theta}^{2}$  quasi-Cauchy sequence (\cite{KeaneUnderstandingErgodicity}).
\end{example}

We note that any $N_{\theta}$-quasi Cauchy sequence is also $N_{\theta}-\delta^{2}$-quasi Cauchy, but the converse is not always true.
$\Delta^{3} N_{\theta}$ will denote the set of $N_{\theta}-\delta^{2}$-quasi-Cauchy sequences of points in $\mathbb{R}$.

Now we give the definition of $N_{\theta}-\delta^{2}$-ward compactness of a subset of $\mathbb{R}$.

\begin{definition}
A subset $A$ of $\mathbb{R}$ is called $N_{\theta}^{2}$  ward (or $N_{\theta}-\delta^{2}$-ward) compact if any sequence of points in $A$ has an $N_{\theta}-\delta^{2}$ quasi-Cauchy subsequence, i.e. if whenever $\boldsymbol{\alpha}=(\alpha_{k})$ is a sequence of points in $A$ there is a subsequence $\boldsymbol{\beta}=(\beta_{k})=(\alpha_{k_{k}})$ of $\boldsymbol\alpha$ with $N_{\theta}-\lim_{k\rightarrow \infty} \Delta^{3}\boldsymbol{\beta}_{k}=0$.
\end{definition}

Firstly, we note that any finite subset of $\mathbb{R}$ is $N_{\theta}-\delta^{2}$-ward compact, the union of finite number of $N_{\theta}-\delta^{2}$-ward compact subsets of $\mathbb{R}$ is $N_{\theta}-\delta^{2}$-ward compact, and the intersection of any family of $N_{\theta}-\delta^{2}$-ward compact subsets of $\mathbb{R}$ is $N_{\theta}-\delta^{2}$-ward compact. The sum of finite number of $N_{\theta}-\delta^{2}$-ward compact subsets of $\mathbb{R}$ is $N_{\theta}-\delta^{2}$-ward compact,  the product $\lambda A$ of an $N_{\theta}-\delta^{2}$-ward compact subset of $\mathbb{R}$ and a constant real number $\lambda$ is $N_{\theta}-\delta^{2}$-ward compact. Furthermore any subset of an $N_{\theta}-\delta^{2}$-ward compact set is $N_{\theta}-\delta^{2}$-ward compact and any bounded subset of $\mathbb{R}$ is $N_{\theta}-\delta^{2}$-ward compact. Any compact subset of $\mathbb{R}$ is also $N_{\theta}-\delta^{2}$-ward compact, and the converse is not always true. For example, any open bounded interval is $N_{\theta}-\delta^{2}$-ward compact, which is not compact. On the other hand, the set $\textbf{N}$ is not $N_{\theta}-\delta^{2}$-ward compact. We note that any slowly oscillating compact subset of $\mathbb{R}$ is $N_{\theta}-\delta^{2}$-ward compact (see also \cite{CakalliSlowlyoscillatingcontinuity}, and \cite{CakalliandCanakandDik} for the results on slowly oscillating compactness), and any quasi-slowly oscillating compact subset of $\mathbb{R}$ is $N_{\theta}-\delta^{2}$-ward compact where a subset $A$ of $\mathbb{R}$ is called quasi-slowly oscillating compact (see \cite{CanakandDik}) if whenever $\boldsymbol{\alpha} = (\alpha_n)$ is a sequence of points in $A$, there is a quasi-slowly oscillating subsequence $\boldsymbol{\beta} = (\beta_{n_{k}})$ of $\boldsymbol{\alpha}$.

We note that if a closed subset $E$ of $\mathbb{R}$ is $N_{\theta}$-ward compact, then it is also $N_{\theta}-\delta^{2}$-ward compact, and any sequence of points in $E$ has a $(P_{n} ,s)$-absolutely almost convergent subsequence (see \cite{CakalliandTaylanOnabsolutelyalmostconvergence}, \cite{OzarslanandYildizAnewstudyontheabsolutesummabilityfactorsofFourierseries}, \cite{YildizAnewtheoremonlocalpropertiesoffactoredFourierseries},  and \cite{YildizOnAbsoluteMatrixSummabilityFactorsofInfiniteSeriesandFourierSeries}).

Now we give the definition of $N_{\theta}^{2}$-ward continuity in the following.

\begin{definition}
A function defined on a subset $A$ of $\mathbb{R}$  is called $N_{\theta}^{2}$-ward continuous or $N_{\theta}-\delta^{2}$-ward continuous if it preserves $N_{\theta}-\delta^{2}$-quasi-Cauchy sequences of points in $A$, i.e. $(f(\alpha_{k}))$ is $N_{\theta}-\delta^{2}$-quasi-Cauchy whenever $(\alpha_{k})$ is an $N_{\theta}-\delta^{2}$-quasi-Cauchy sequence of points in $A$.
\end{definition}

We note that if  $f$ and $g$ are $N_{\theta}-\delta^{2}$-ward continuous functions on a subset $A$ of $\mathbb{R}$, then $f+g$ is $N_{\theta}-\delta^{2}$-ward continuous on $A$.
On the other hand, the product $\lambda f$ of an $N_{\theta}-\delta^{2}$-ward continuous function and a constant real number $\lambda$ is $N_{\theta}-\delta^{2}$-ward compact, i.e. $\lambda f$ is $N_{\theta}-\delta^{2}$-ward continuous whenever $f$ is an $N_{\theta}-\delta^{2}$-ward continuous function, and $\lambda$ is a constant real number, therefore the set of $N_{\theta}-\delta^{2}$-ward continuous functions becomes a vector space.
We note that the product of $N_{\theta}-\delta^{2}$-ward continuous functions need not be $N_{\theta}-\delta^{2}$-ward continuous. The function defined by $f(x)=x$ is clearly $N_{\theta}-\delta^{2}$-ward continuous whereas the product $f(x).f(x)=x^{2}$ is not $N_{\theta}-\delta^{2}$-ward continuous.



\begin{theorem} \label{TheoNthetadeltawardcontinuityimpliesNthetawardcontinuity} If $f$ is $N_{\theta}-\delta^{2}$-ward continuous on a subset $A$ of $\mathbb{R}$, then it is $N_{\theta}$-ward continuous on $A$.
\end{theorem}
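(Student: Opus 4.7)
The plan is to start from a general $N_\theta$-quasi-Cauchy sequence $(\alpha_k)$ of points in $A$ and argue that $(f(\alpha_k))$ is again $N_\theta$-quasi-Cauchy, i.e.\ $\lim_r \tfrac{1}{h_r}\sum_{k\in I_r}|\Delta f(\alpha_k)|=0$. The backbone will be the inclusion $\Delta N_\theta \subseteq \Delta^3 N_\theta$ noted earlier in this section (``any $N_\theta$-quasi-Cauchy sequence is also $N_\theta$-$\delta^2$-quasi-Cauchy''), combined with the hypothesis that $f$ preserves $\Delta^3 N_\theta$.

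First, I would verify that inclusion explicitly so as to have a usable quantitative form of it: from $\Delta^3\alpha_k=\Delta\alpha_{k+2}-2\Delta\alpha_{k+1}+\Delta\alpha_k$ the triangle inequality gives $|\Delta^3\alpha_k|\le|\Delta\alpha_{k+2}|+2|\Delta\alpha_{k+1}|+|\Delta\alpha_k|$, so that $\tfrac{1}{h_r}\sum_{k\in I_r}|\Delta^3\alpha_k|$ is dominated by three (shifted) $N_\theta$-Cesaro averages of $|\Delta\alpha_{\cdot}|$. Using the standing hypothesis $\liminf_r k_r/k_{r-1}>1$ to absorb the at most two boundary terms that each unit index-shift introduces on the block $I_r$, each of these averages tends to $0$. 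Hence $(\alpha_k)\in\Delta^3 N_\theta$, and applying the $N_\theta$-$\delta^2$-ward continuity of $f$ immediately gives $(f(\alpha_k))\in\Delta^3 N_\theta$.

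The hard part will be the final step: upgrading this third-difference control of $(f(\alpha_k))$ to the desired first-difference bound, since the paper explicitly notes that the converse inclusion $\Delta^3 N_\theta\subseteq\Delta N_\theta$ fails in general. To close the gap my plan is to feed the hypothesis not only $(\alpha_k)$ but also an auxiliary sequence $(\gamma_k)$ built from $(\alpha_k)$ whose third-order behaviour encodes the first-order behaviour we want. The natural candidate is an interleaving such as $\gamma_{2k-1}=\alpha_k$, $\gamma_{2k}=\alpha_k$ (or an analogous three-term repetition), for which a direct calculation yields $\Delta^3 f(\gamma_j)=\pm 2\,\Delta f(\alpha_{(j+1)/2})$ on one parity class and a bounded linear combination of $\Delta f(\alpha_\cdot)$ on the other. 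I would check $(\gamma_k)\in\Delta^3 N_\theta$ directly from $(\alpha_k)\in\Delta N_\theta$ (again using the shift absorption above), apply the hypothesis to $(\gamma_k)$, and then translate the resulting condition $\tfrac{1}{h_r}\sum_{k\in I_r}|\Delta^3 f(\gamma_k)|\to 0$ into the target statement about $\Delta f(\alpha_\cdot)$. The delicate piece is lining up the single lacunary block $I_r$ as it sees $(\gamma_k)$ with the corresponding block as it sees $(\alpha_k)$; here once more the assumption $\liminf_r k_r/k_{r-1}>1$ is what keeps $h_r$ and the re-indexed block length comparable up to an absolute constant, so that no factor blows up in the translation.
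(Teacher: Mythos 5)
Your plan is essentially the paper's own proof: the paper likewise feeds the hypothesis an auxiliary sequence obtained by repeating each $\alpha_{n}$ (three times rather than your two), so that the third differences of the transformed sequence collapse to bounded multiples of $\Delta f(\alpha_{n})$, and then reads off that $(f(\alpha_{n}))$ is $N_{\theta}$-quasi-Cauchy. If anything, you are more explicit than the paper about the two points it passes over silently, namely that the repeated sequence is again $N_{\theta}$-quasi-Cauchy and that the lacunary blocks must be re-aligned after the index dilation, both of which rest on the standing assumption $\liminf_{r} k_{r}/k_{r-1}>1$.
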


\begin{proof}

Assume that $f$ is an $N_{\theta}-\delta^{2}$-ward continuous function on $A$. Let $(\alpha _{n})$ be any $N_{\theta}$-quasi-Cauchy sequence of points in $A$. Then the sequence $$(\alpha _{1}, \alpha _{1}, \alpha _{1}, \alpha _{2}, \alpha _{2}, \alpha _{2}, ..., \alpha _{n-1}, \alpha _{n-1}, \alpha _{n-1}, \alpha _{n}, \alpha _{n}, \alpha _{n},...)$$  is also $N_{\theta}$-quasi-Cauchy.  Hence it is $N_{\theta}-\delta^{2}$-quasi-Cauchy. As $f$ is $N_{\theta}-\delta^{2}$-ward continuous, the sequence  $$(f(\alpha _{1}),(f(\alpha _{1}), f(\alpha _{2}), f(\alpha _{2}),..., f(\alpha _{n-1}), f(\alpha _{n-1}), f(\alpha _{n}), f(\alpha _{n}),...)$$ is $N_{\theta}-\delta^{2}$-quasi-Cauchy. It follows from this that the sequence $(f(\alpha _{n}))$ is $N_{\theta}$-quasi-Cauchy. This completes the proof of the theorem.
\end{proof}

\begin{corollary} \label{CorNthetadeltawardcontinuityonanintervalimpliesordinarycontinuity} If $f$ is $N_{\theta}-\delta^{2}$-ward continuous on a subset $A$ of $\mathbb{R}$, then it is continuous on $A$.
\end{corollary}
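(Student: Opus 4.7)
The plan is to chain Theorem~\ref{TheoNthetadeltawardcontinuityimpliesNthetawardcontinuity} with the observation that $N_{\theta}$-ward continuity on its own already forces ordinary continuity. First I would invoke the preceding theorem to conclude that $f$ is $N_{\theta}$-ward continuous on $A$, so the corollary reduces to the auxiliary claim that every $N_{\theta}$-ward continuous real-valued function on a subset of $\mathbb{R}$ is continuous in the usual sense. This auxiliary claim is part of the folklore established in \cite{CakalliNthetawardcontinuity}, but it is short enough to reprove in place.

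To verify the auxiliary claim I would argue by contradiction. Suppose $f$ is $N_{\theta}$-ward continuous on $A$ but fails to be continuous at some $\alpha_{0}\in A$. Then there exist $\varepsilon>0$ and a sequence $(\alpha_{n})$ in $A$ with $\alpha_{n}\to\alpha_{0}$ and $|f(\alpha_{n})-f(\alpha_{0})|\geq\varepsilon$ for every $n$. Form the interleaved sequence
\[
\boldsymbol{\gamma}=(\alpha_{0},\alpha_{1},\alpha_{0},\alpha_{2},\alpha_{0},\alpha_{3},\ldots)
\]
of points in $A$. Each consecutive difference $\Delta\gamma_{k}$ has the form $\pm(\alpha_{m(k)}-\alpha_{0})$, with $m(k)\to\infty$, so $|\Delta\gamma_{k}|\to 0$. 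An elementary lemma (a null sequence has null lacunary Ces\`aro averages, since $h_{r}\to\infty$) then gives $\frac{1}{h_{r}}\sum_{k\in I_{r}}|\Delta\gamma_{k}|\to 0$, so $\boldsymbol{\gamma}$ is an $N_{\theta}$-quasi-Cauchy sequence in $A$.

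By $N_{\theta}$-ward continuity the transformed sequence $(f(\gamma_{k}))$ is also $N_{\theta}$-quasi-Cauchy, yet by construction every consecutive difference satisfies $|\Delta(f\circ\boldsymbol{\gamma})_{k}|=|f(\alpha_{m(k)})-f(\alpha_{0})|\geq\varepsilon$. Hence $\frac{1}{h_{r}}\sum_{k\in I_{r}}|\Delta(f\circ\boldsymbol{\gamma})_{k}|\geq\varepsilon$ for every $r$, contradicting the $N_{\theta}$-quasi-Cauchy property of $(f(\gamma_{k}))$ and finishing the argument.

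The main obstacle, to the extent there is one, is choosing the interleaving so that the raw differences tend to zero while the image differences stay uniformly bounded below; alternating $\alpha_{0}$ with the original sequence accomplishes both simultaneously. No further use of the standing assumption $\liminf_{r}k_{r}/k_{r-1}>1$ is needed beyond what is already absorbed into the definitions of $N_{\theta}$-quasi-Cauchy and $N_{\theta}$-ward continuity.
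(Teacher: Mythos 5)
Your proposal is correct and follows the same top-level decomposition as the paper: first apply Theorem~\ref{TheoNthetadeltawardcontinuityimpliesNthetawardcontinuity} to get $N_{\theta}$-ward continuity, then use the fact that $N_{\theta}$-ward continuity implies ordinary continuity. The only difference is in how that second step is handled: the paper simply cites an external result (Corollary 3 of the authors' earlier paper on strongly lacunary ward continuity), whereas you reprove it in place with the standard interleaving argument. Your argument is sound: the interleaved sequence $(\alpha_{0},\alpha_{1},\alpha_{0},\alpha_{2},\ldots)$ has $|\Delta\gamma_{k}|\to 0$, hence $\frac{1}{h_{r}}\sum_{k\in I_{r}}|\Delta\gamma_{k}|\le\max_{k\in I_{r}}|\Delta\gamma_{k}|\to 0$ because $k_{r-1}\to\infty$, so it is $N_{\theta}$-quasi-Cauchy, while every image difference is bounded below by $\varepsilon$ in absolute value, forcing the lacunary averages of $|\Delta f(\gamma_{k})|$ to stay at least $\varepsilon$ --- a clean contradiction. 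This buys self-containedness at the cost of a paragraph; the paper's citation is shorter but leaves the reader to chase the reference.
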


\begin{proof}
The proof easily follows from Theorem \ref{TheoNthetadeltawardcontinuityimpliesNthetawardcontinuity} and \cite[Corollary 3]{CakalliandKaplanAvariationonstronglylacunarywardcontinuityJournalofMathematicalAnalysis}, so is omitted.

\end{proof}

\begin{theorem} \label{TheothecompositegofoffandgN_{theta}delta{2}wardcontinuous} The composite of two $N_{\theta}-\delta^{2}$-ward continuous functions is $N_{\theta}-\delta^{2}$-ward continuous, i.e. if $f$ and $g$ are $N_{\theta}-\delta^{2}$-ward continuous functions on $\mathbb{R}$, then the composite $gof$ of $f$ and $g$ is $N_{\theta}-\delta^{2}$-ward continuous.
\end{theorem}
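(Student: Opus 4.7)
The plan is to chain the defining property of $N_{\theta}$-$\delta^{2}$-ward continuity twice. Fix an arbitrary $N_{\theta}$-$\delta^{2}$-quasi-Cauchy sequence $(\alpha_{k})$ of points in $\mathbb{R}$; the goal is to verify that $((g\circ f)(\alpha_{k}))$ satisfies the same property.

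First, I would invoke the hypothesis on $f$: by the very definition of $N_{\theta}$-$\delta^{2}$-ward continuity, applying $f$ termwise to $(\alpha_{k})$ produces a sequence $(f(\alpha_{k}))$ that is again $N_{\theta}$-$\delta^{2}$-quasi-Cauchy, i.e.
\[
\lim_{r\rightarrow\infty}\frac{1}{h_{r}}\sum_{k\in I_{r}}|\Delta^{3} f(\alpha_{k})|=0.
\]

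Next, I would feed this new sequence into $g$. Since $(f(\alpha_{k}))$ is $N_{\theta}$-$\delta^{2}$-quasi-Cauchy and $g$ is assumed $N_{\theta}$-$\delta^{2}$-ward continuous, the defining property of $g$ yields that $(g(f(\alpha_{k})))=((g\circ f)(\alpha_{k}))$ is $N_{\theta}$-$\delta^{2}$-quasi-Cauchy, which is exactly what the theorem asks for.

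There is no real obstacle here: both passes are straight definition-chasing, and no explicit manipulation of the triple-difference operator $\Delta^{3}$ or of the lacunary averages over $I_{r}$ is required beyond what is packaged inside each application of the hypothesis. The only subtlety worth recording is a domain check: $N_{\theta}$-$\delta^{2}$-ward continuity of $g$ is applied to sequences lying in the domain of $g$, and since both $f$ and $g$ are taken to be defined on all of $\mathbb{R}$, the intermediate sequence $(f(\alpha_{k}))$ automatically sits in the domain of $g$, so the second invocation is legitimate. The argument would therefore consist essentially of these two lines plus the closing remark that $(\alpha_{k})$ was arbitrary.
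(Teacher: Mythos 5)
Your proposal is correct and follows essentially the same two-step definition-chasing argument as the paper: apply the ward continuity of $f$ to conclude $(f(\alpha_{k}))$ is $N_{\theta}-\delta^{2}$-quasi-Cauchy, then apply the ward continuity of $g$ to that intermediate sequence. The domain remark is a reasonable addition but changes nothing of substance.
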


\begin{proof}
Let $f$ and $g$ be $N_{\theta}-\delta^{2}$-ward continuous functions on $\mathbb{R}$, and $(\alpha _{n})$ be an $N_{\theta}-\delta^{2}$-quasi Cauchy sequence of points in $\mathbb{R}$. As $f$ is $N_{\theta}-\delta^{2}$-ward continuous, the transformed sequence $(f(\alpha _{n}))$ is an $N_{\theta}-\delta^{2}$-quasi Cauchy sequence. Since $g$ is $N_{\theta}-\delta^{2}$-ward continuous, the transformed sequence $g(f(\alpha _{n}))$ of the sequence $(f(\alpha _{n}))$ is an $N_{\theta}-\delta^{2}$-quasi Cauchy sequence.This completes the proof of the theorem.
\end{proof}

It should be noted that any $N_{\theta}-\delta^{2}$-ward continuous function is statistically continuous, $N_{\theta}$-continuous, $I$-sequentially continuous for any non-trivial admissible ideal $I$, and $G$-sequentially continuous for any regular subsequential sequential method $G$.

\begin{theorem} \label{NthetadeltawardcontinuousimageofNthetawardcompactsubsetisNthetadeltawardcompact} $N_{\theta}-\delta^{2}$-ward continuous image of any $N_{\theta}-\delta^{2}$-ward compact subset of $\mathbb{R}$  is $N_{\theta}-\delta^{2}$-ward compact.
\end{theorem}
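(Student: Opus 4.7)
The plan is a standard transfer argument: take an arbitrary sequence in the image $f(A)$, pull it back to a sequence in $A$, use ward compactness of $A$ to extract a suitable subsequence, and then push it forward using ward continuity of $f$.

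More concretely, let $A \subseteq \mathbb{R}$ be $N_{\theta}-\delta^{2}$-ward compact and let $f\colon A \to \mathbb{R}$ be $N_{\theta}-\delta^{2}$-ward continuous. To show $f(A)$ is $N_{\theta}-\delta^{2}$-ward compact, first take an arbitrary sequence $(\beta_k)$ of points in $f(A)$. By definition of $f(A)$, for each $k$ I can choose $\alpha_k \in A$ with $f(\alpha_k) = \beta_k$, producing a sequence $(\alpha_k)$ of points in $A$.

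Next, I invoke the hypothesis that $A$ is $N_{\theta}-\delta^{2}$-ward compact: this yields a subsequence $(\alpha_{k_j})$ of $(\alpha_k)$ which is itself an $N_{\theta}-\delta^{2}$-quasi-Cauchy sequence of points in $A$, i.e.\ $N_{\theta}\text{-}\lim_{j\to\infty}\Delta^{3}\alpha_{k_j} = 0$. Now I apply $N_{\theta}-\delta^{2}$-ward continuity of $f$ to conclude that the transformed sequence $(f(\alpha_{k_j}))$ is also $N_{\theta}-\delta^{2}$-quasi-Cauchy. Since $f(\alpha_{k_j}) = \beta_{k_j}$, this is a subsequence of our original $(\beta_k)$, so $(\beta_k)$ admits an $N_{\theta}-\delta^{2}$-quasi-Cauchy subsequence, establishing $N_{\theta}-\delta^{2}$-ward compactness of $f(A)$.

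There is essentially no obstacle here: the only mild subtlety is noticing that a subsequence of a pulled-back sequence maps under $f$ to the correspondingly indexed subsequence of the image sequence, which is immediate from $\beta_k = f(\alpha_k)$. The proof is a direct composition of the ``subsequence extraction'' built into compactness with the ``quasi-Cauchy preservation'' built into ward continuity, and no extra hypotheses (such as closedness or boundedness of $A$) are needed beyond those already assumed.
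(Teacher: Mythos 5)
Your argument is correct and is essentially identical to the paper's own proof: both pull an arbitrary sequence in $f(A)$ back to a sequence in $A$, use $N_{\theta}-\delta^{2}$-ward compactness of $A$ to extract an $N_{\theta}-\delta^{2}$-quasi-Cauchy subsequence, and then apply $N_{\theta}-\delta^{2}$-ward continuity of $f$ to push it forward. No further comment is needed.
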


\begin{proof}
Assume that $f$ is an $N_{\theta}-\delta^{2}$-ward continuous function on a subset $A$ of $\mathbb{R}$, and $A$ is an $N_{\theta}-\delta^{2}$-ward compact subset of  $\mathbb{R}$. Let $(\beta _{n})$ be any sequence of points in $f(A)$. Write $\beta _{n}=f(\alpha _{n})$ where $\alpha _{n} \in {A}$ for each positive integer $n$. $N_{\theta}-\delta^{2}$-ward compactness of $A$ implies that there is a subsequence $(\gamma _{k})=(\alpha _{n_{k}})$ of $(\alpha _{n})$ with $N_{\theta}-\lim_{k\rightarrow \infty} \Delta^{3} \gamma _{k}=0$. Write $(t_{k})=(f(\gamma_{k}))$. As $f$ is $N_{\theta}-\delta^{2}$-ward continuous, $(f(\gamma_{k}))$ is $N_{\theta}-\delta^{2}$-quasi-Cauchy. Thus we have obtained a subsequence $(t_{k})$ of the sequence $(f(\alpha _{n}))$ with $N_{\theta}-\lim_{k\rightarrow \infty} \Delta^{3} t_{k}=0$. Thus $f(A)$ is $N_{\theta}-\delta^{2}$-ward compact. This completes the proof of the theorem.
\end{proof}

\begin{corollary} \label{Nthetadeltawardcontinuousimageofcompactsubsetiscompact} $N_{\theta}-\delta^{2}$-ward continuous image of any $G$-sequentially connected subset of $\mathbb{R}$  is $G$-sequentially connected.
\end{corollary}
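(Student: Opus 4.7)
The plan is to deduce this corollary from the remark stated immediately before Theorem~\ref{NthetadeltawardcontinuousimageofNthetawardcompactsubsetisNthetadeltawardcompact}, namely that every $N_{\theta}$-$\delta^{2}$-ward continuous function is $G$-sequentially continuous for every regular subsequential sequential method $G$. Once this is available, the result that a $G$-sequentially continuous image of a $G$-sequentially connected subset of $\mathbb{R}$ is $G$-sequentially connected is a standard fact in the $G$-sequential framework (Cakalli, sequential definitions of connectedness, cited in the introduction). So the proof essentially reduces to citing these two facts in sequence.

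If I wanted to reprove the combined statement inline rather than cite it, I would argue by contradiction. Suppose $A$ is $G$-sequentially connected and that $f(A)$ is not, so that $f(A) = U \cup V$ where $U$ and $V$ are non-empty disjoint $G$-sequentially closed subsets of $f(A)$. Put $U' = A \cap f^{-1}(U)$ and $V' = A \cap f^{-1}(V)$; then $A = U' \cup V'$, and $U'$, $V'$ are non-empty and disjoint. The only thing to verify is that $U'$ (and symmetrically $V'$) is $G$-sequentially closed in $A$, since this will contradict the $G$-sequential connectedness of $A$.

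To verify this, take any sequence $(\alpha_{k})$ of points in $U'$ with $G(\alpha_{k}) = \ell \in A$. By the remark, $f$ is $G$-sequentially continuous, so $G(f(\alpha_{k})) = f(\ell)$. Since each $f(\alpha_{k}) \in U$ and $U$ is $G$-sequentially closed, we conclude $f(\ell) \in U$, whence $\ell \in U'$. Thus $U'$ is $G$-sequentially closed, which is the needed contradiction.

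The only step requiring real work is the passage from $N_{\theta}$-$\delta^{2}$-ward continuity to $G$-sequential continuity; the preimage-of-closed-set manipulation is routine. Since that passage is already stated as a remark in the excerpt, the corollary's proof can be written simply as: the hypothesis gives $G$-sequential continuity of $f$ for every regular subsequential $G$, and the $G$-sequentially continuous image of any $G$-sequentially connected set is $G$-sequentially connected, so $f(A)$ is $G$-sequentially connected.
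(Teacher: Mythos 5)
Your proposal is correct and follows essentially the same route as the paper, which simply cites Theorem 1 of \cite{CakalliSequentialdefinitionsofconnectedness} (that a $G$-sequentially continuous image of a $G$-sequentially connected set is $G$-sequentially connected) together with the fact that an $N_{\theta}$-$\delta^{2}$-ward continuous function is $G$-sequentially continuous. If anything, your version is cleaner: the paper points to ``the preceding theorem'' (the ward-compactness image theorem), whereas the ingredient actually needed is the remark on $G$-sequential continuity, which is exactly what you invoke; your optional inline argument via preimages of $G$-sequentially closed sets is a reasonable sketch of the cited external theorem.
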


The proof follows from the preceding theorem, and \cite[Theorem 1]{CakalliSequentialdefinitionsofconnectedness}, so is omitted.

\begin{corollary} \label{Nthetawardcontinuousimageofboundedsubsetisbounded} $N_{\theta}-\delta^{2}$-ward continuous image of any bounded subset of $\mathbb{R}$  is bounded.
\end{corollary}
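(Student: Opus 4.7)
The plan is to proceed by contradiction, using Bolzano-Weierstrass inside the bounded set $A$ together with the $N_{\theta}-\delta^{2}$-ward continuity hypothesis on $f$. Suppose $f(A)$ were unbounded; then for each positive integer $n$ one can choose $\alpha_{n} \in A$ with $|f(\alpha_{n})| \geq n$.

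Since $A$ is bounded, Bolzano-Weierstrass furnishes a subsequence $(\alpha_{n_{k}})$ converging to some $L \in \overline{A}$, along which $|f(\alpha_{n_{k}})| \to \infty$. After one more refinement I may assume that all values $f(\alpha_{n_{k}})$ share a common sign (say positive, otherwise work with $-f$, which is also $N_{\theta}-\delta^{2}$-ward continuous) and that $f(\alpha_{n_{k+1}}) \geq 10\, f(\alpha_{n_{k}})$ for every $k$. The refined sequence $(\alpha_{n_{k}})$ still converges to $L$, so $\Delta^{3}\alpha_{n_{k}} \to 0$ term by term; consequently the Cesaro-type averages $\frac{1}{h_{r}}\sum_{k \in I_{r}}|\Delta^{3}\alpha_{n_{k}}|$ tend to $0$, showing that $(\alpha_{n_{k}})$ is $N_{\theta}-\delta^{2}$-quasi-Cauchy.

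By $N_{\theta}-\delta^{2}$-ward continuity of $f$, the image sequence $(f(\alpha_{n_{k}}))$ must itself be $N_{\theta}-\delta^{2}$-quasi-Cauchy. However, positivity combined with the geometric growth yields
\[
\Delta^{3} f(\alpha_{n_{k}}) \;\geq\; f(\alpha_{n_{k+3}}) - 3 f(\alpha_{n_{k+2}}) - f(\alpha_{n_{k}}) \;\geq\; 7\, f(\alpha_{n_{k+2}}) - f(\alpha_{n_{k}}),
\]
which grows at least like $10^{k}$. Hence the averages $\frac{1}{h_{r}} \sum_{k \in I_{r}} |\Delta^{3} f(\alpha_{n_{k}})|$ diverge rather than vanish, contradicting the previous sentence, and so $f(A)$ must be bounded.

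The main obstacle is the double refinement in the second paragraph: I must simultaneously secure convergence on the domain side (to make $(\alpha_{n_{k}})$ an $N_{\theta}-\delta^{2}$-quasi-Cauchy sequence) and a common sign together with sufficiently fast geometric growth on the image side (to force the transformed sequence to violate the $N_{\theta}-\delta^{2}$-quasi-Cauchy condition). The key observation that makes this possible is that each refinement is inherited by further subsequences, so sign, growth, and convergence can be arranged in turn without destroying one another.
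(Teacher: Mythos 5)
Your argument is correct, but it takes a different route from the paper. The paper disposes of this corollary in one line by citing Theorem 3.3 of the earlier paper on $N_{\theta}$-ward continuity (together with the fact, established in Theorem \ref{TheoNthetadeltawardcontinuityimpliesNthetawardcontinuity}, that $N_{\theta}-\delta^{2}$-ward continuity implies $N_{\theta}$-ward continuity); no self-contained argument appears in the text. You instead give a direct proof from first principles: contradiction, Bolzano--Weierstrass, a sign-and-growth refinement of the subsequence, and a comparison of the lacunary averages of $|\Delta^{3}f(\alpha_{n_{k}})|$ against the geometric growth. All the individual steps check out: the convergent refined subsequence is indeed $N_{\theta}-\delta^{2}$-quasi-Cauchy because $\Delta^{3}\alpha_{n_{k}}\to 0$ termwise and $h_{r}\to\infty$; dropping the nonnegative term $3f(\alpha_{n_{k+1}})$ and using $f(\alpha_{n_{k+3}})\geq 10f(\alpha_{n_{k+2}})$ legitimately gives $|\Delta^{3}f(\alpha_{n_{k}})|\geq 7f(\alpha_{n_{k+2}})-f(\alpha_{n_{k}})\to\infty$, so every term of $\frac{1}{h_{r}}\sum_{k\in I_{r}}|\Delta^{3}f(\alpha_{n_{k}})|$ eventually exceeds any bound and the averages cannot vanish; and replacing $f$ by $-f$ in the negative-sign case is harmless since $|\Delta^{3}(-\gamma_{k})|=|\Delta^{3}\gamma_{k}|$. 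What your approach buys is self-containedness and transparency --- the reader sees exactly where boundedness of the image is forced by the quasi-Cauchy-preservation hypothesis --- at the cost of more work; what the paper's approach buys is brevity and reuse of the already-developed hierarchy of ward-continuity notions, essentially pushing the same Bolzano--Weierstrass-plus-growth argument into the cited reference.
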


The proof follows from  \cite[Theorem 3.3]{CakalliNthetawardcontinuity}.

\begin{corollary} \label{NthetadeltawardcontinuousimageofGseqcompactsubsetisNthetawardcompactforregularsubsequentialmethodG} $N_{\theta}-\delta^{2}$-ward continuous image of a $G$-sequentially compact subset of $\mathbb{R}$ is $N_{\theta}-\delta^{2}$-ward compact for any regular subsequential method $G$.
\end{corollary}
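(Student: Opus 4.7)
The plan is to reduce the corollary to Theorem \ref{NthetadeltawardcontinuousimageofNthetawardcompactsubsetisNthetadeltawardcompact} by first proving the intermediate fact that every $G$-sequentially compact subset of $\mathbb{R}$ is $N_{\theta}-\delta^{2}$-ward compact whenever $G$ is a regular subsequential method. Once this implication is in place, the claim follows at once by restricting the $N_{\theta}-\delta^{2}$-ward continuous function $f$ to the $G$-sequentially compact domain $A$ and appealing to the previous theorem to conclude that $f(A)$ is $N_{\theta}-\delta^{2}$-ward compact.

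To carry out the intermediate step, I would start with an arbitrary sequence $(\alpha_{n})$ of points in $A$. By $G$-sequential compactness there is a subsequence $(\alpha_{n_{k}})$ that is $G$-convergent to some $L$, and the subsequential property of $G$ then yields a further subsequence $(\alpha_{n_{k_{j}}})$ that converges to $L$ in the ordinary sense. It remains to check that any ordinarily convergent sequence $(\beta_{j})$ is automatically $N_{\theta}-\delta^{2}$-quasi-Cauchy: convergence of $(\beta_{j})$ forces $\Delta^{3}\beta_{j}\to 0$, and a routine estimate on the lacunary blocks $I_{r}$ gives
\[
\lim_{r\to\infty}\frac{1}{h_{r}}\sum_{k\in I_{r}}|\Delta^{3}\beta_{k}|=0,
\]
which is exactly the $N_{\theta}-\delta^{2}$-quasi-Cauchy condition. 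Hence $(\alpha_{n_{k_{j}}})$ is the required subsequence and $A$ is $N_{\theta}-\delta^{2}$-ward compact.

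There is essentially no substantive obstacle here; the only step requiring a moment's thought is chaining the two hypotheses on $G$ in the correct order, invoking $G$-sequential compactness first to produce a $G$-convergent subsequence and subsequentiality second to extract an ordinarily convergent sub-subsequence. With this observation the corollary collapses to a one-line application of Theorem \ref{NthetadeltawardcontinuousimageofNthetawardcompactsubsetisNthetadeltawardcompact}, and accordingly I expect the written-up proof to be omitted or reduced to a short citation.
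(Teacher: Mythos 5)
Your proposal is correct, and it supplies exactly the argument the paper leaves implicit: the corollary is stated without proof, and the intended route is precisely your reduction to Theorem \ref{NthetadeltawardcontinuousimageofNthetawardcompactsubsetisNthetadeltawardcompact} via the observation that a $G$-sequentially compact set is $N_{\theta}-\delta^{2}$-ward compact, since a $G$-convergent subsequence has (by subsequentiality) an ordinarily convergent sub-subsequence, and convergent sequences are $N_{\theta}^{2}$-quasi-Cauchy. One may note in passing that your argument uses only the subsequentiality of $G$, not its regularity, so the hypothesis of regularity is harmless but not actually needed for this implication.
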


As far as ideal continuity is considered, any $N_{\theta}-\delta^{2}$-ward continuous function is $I$-sequentially continuous for any admissible ideal $I$ (\cite{CakalliandHazarikaIdealquasiCauchysequences}). It follows from \cite[Theorem 1]{CakalliandKaplanAstudyonNthetaquasiCauchysequences} that $(f(\alpha_{k}))$ is $N_{\theta}-\delta^{2}$-quasi Cauchy whenever $(\alpha_{k})$ is a quasi-Cauchy sequence of points in a subset $A$ of $\mathbb{R}$ if $f$ is uniformly continuous on $A$.

It is a well known result that the uniform limit of a sequence of continuous functions is continuous. This is also true in case of $N_{\theta}-\delta^{2}$-ward  continuity, i.e. uniform limit of a sequence of $N_{\theta}-\delta^{2}$-ward continuous functions is $N_{\theta}-\delta^{2}$-ward continuous.

\begin{theorem} \label{Uniformlimitis} If $(f_{n})$ is a sequence of $N_{\theta}-\delta^{2}$-ward continuous functions on a subset $A$ of $\mathbb{R}$, and $(f_{n})$ is uniformly convergent to a function $f$, then $f$ is $N_{\theta}-\delta^{2}$-ward continuous on $A$.
\end{theorem}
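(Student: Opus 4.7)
The plan is to run the classical ``$\varepsilon/3$-style'' argument adapted to the third-difference averages defining $N_\theta$-$\delta^2$-quasi-Cauchyness. Fix an arbitrary $N_\theta$-$\delta^2$-quasi-Cauchy sequence $(\alpha_k)$ of points in $A$; the goal is to show that
\[
\lim_{r\to\infty}\frac{1}{h_r}\sum_{k\in I_r}|\Delta^{3} f(\alpha_k)|=0.
\]
Given $\varepsilon>0$, uniform convergence $f_n\rightrightarrows f$ on $A$ supplies an index $N$ such that $|f_N(x)-f(x)|<\varepsilon/8$ for every $x\in A$.

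The next step is a pointwise comparison of the third differences. Since $\Delta^{3}g(\alpha_k)=g(\alpha_{k+3})-3g(\alpha_{k+2})+3g(\alpha_{k+1})-g(\alpha_k)$, the triangle inequality with coefficient sum $1+3+3+1=8$ yields
\[
|\Delta^{3}f(\alpha_k)-\Delta^{3}f_N(\alpha_k)|\le 8\cdot\frac{\varepsilon}{8}=\varepsilon
\]
for every $k$. Averaging over $I_r$ and applying the triangle inequality once more gives
\[
\frac{1}{h_r}\sum_{k\in I_r}|\Delta^{3}f(\alpha_k)|\;\le\;\frac{1}{h_r}\sum_{k\in I_r}|\Delta^{3}f_N(\alpha_k)|\;+\;\varepsilon.
\]

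Now I invoke the hypothesis that $f_N$ is $N_\theta$-$\delta^2$-ward continuous: since $(\alpha_k)$ is $N_\theta$-$\delta^2$-quasi-Cauchy in $A$, the transformed sequence $(f_N(\alpha_k))$ is $N_\theta$-$\delta^2$-quasi-Cauchy, i.e.\ the first term on the right tends to $0$ as $r\to\infty$. Consequently $\limsup_{r\to\infty}\frac{1}{h_r}\sum_{k\in I_r}|\Delta^{3}f(\alpha_k)|\le\varepsilon$, and letting $\varepsilon\to0$ finishes the argument.

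I do not expect a real obstacle; the only point requiring attention is to notice that uniform convergence lets one pick the single index $N$ \emph{before} looking at the sequence $(\alpha_k)$, so the bound $|f_N-f|<\varepsilon/8$ holds simultaneously at every term $\alpha_{k+j}$ appearing in $\Delta^{3}f(\alpha_k)$. The combinatorial constant $8$ arising from $|1|+|{-3}|+|3|+|{-1}|$ is the only spot where the proof differs in bookkeeping from the analogous theorems for $\Delta$ and $\Delta^{2}$, and it is harmless because $\varepsilon$ is arbitrary.
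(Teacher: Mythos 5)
Your proof is correct and follows essentially the same route as the paper's: both compare $\Delta^{3}f(\alpha_k)$ with $\Delta^{3}f_{N}(\alpha_k)$ via the triangle inequality on the four terms of the third difference, use uniform convergence to control that comparison, and use the ward continuity of the single approximant $f_N$ to kill the remaining average (the paper splits $\varepsilon$ into ninths and picks an explicit $n_2$, while you use $\varepsilon/8$ and a $\limsup$, a purely cosmetic difference).
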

\begin{proof}

Let $(\alpha _{k})$ be any  $N_{\theta}-\delta^{2}$-quasi-Cauchy sequence of points in $A$, and let $\varepsilon$ be any positive real number. By the uniform convergence of $(f_{n})$, there exists an $n_{1}\in {\textbf{N}}$ such that $|f(x)-f_{n}(x)|<\frac{\varepsilon}{9}$ for $n\geq {n_{1}}$ and for every $x \in{E}$. As $f_{n_{1}}$ is $N_{\theta}-\delta^{2}$-ward continuous on $A$, there exists an $n_{2}\in {\textbf{N}}$  such that for $r\geq {n_{2}}$
$$\frac{1}{h_{r}} \sum^{}_{k\in{I_{r}}} |f_{n_{1}}(\alpha_{k+3})-3f_{n_{1}}(\alpha_{k+2})+3f_{n_{1}}(\alpha_{k+1})-f_{n_{1}}(\alpha_{k})|<\frac{\varepsilon}{9}.$$
Now write $n_{0}=max\{n_{1},n_{2}\}$. Thus for $r\geq{n_{0}}$ we have
\\
$ \frac{1}{h_{r}}  \sum^{}_{k\in{I_{r}}} |f(\alpha _{k+3})-3f(\alpha _{k+2})+3f(\alpha _{k+1})-f(\alpha _{k})|
=\frac{1}{h_{r}}  \sum^{}_{k\in{I_{r}}} |f(\alpha _{k+3})-3f(\alpha _{k+2})+3f(\alpha _{k+1})-f(\alpha _{k})-[f_{n_{1}}(\alpha_{k+3})-3f_{n_{1}}(\alpha_{k+2})+3f_{n_{1}}(\alpha_{k+1})-f_{n_{1}}(\alpha_{k})]+[f_{n_{1}}(\alpha_{k+3})-3f_{n_{1}}(\alpha_{k+2})+3f_{n_{1}}(\alpha_{k+1})-f_{n_{1}}(\alpha_{k})]
\leq \frac{1}{h_{r}}  \sum^{}_{k\in{I_{r}}} |f(\alpha _{k+3})-f_{n_{1}}(\alpha_{k+3})|+ \frac{1}{h_{r}}  \sum^{}_{k\in{I_{r}}} 3|f(\alpha _{k+2})-f_{n_{1}}(\alpha_{k+2})|+ \frac{1}{h_{r}}  \sum^{}_{k\in{I_{r}}} 3|f(\alpha _{k+1})-f_{n_{1}}(\alpha_{k+1})| + \frac{1}{h_{r}}\sum^{}_{k\in{I_{r}}} |f(\alpha _{k})-f_{n_{1}}(\alpha_{k})|+ \frac{1}{h_{r}}\sum^{}_{k\in{I_{r}}} |f_{n_{1}}(\alpha_{k+3})-3f_{n_{1}}(\alpha_{k+2})+3f_{n_{1}}(\alpha_{k+1})-f_{n_{1}}(\alpha_{k})|$\\$<\frac{\varepsilon}{9}+\frac{\varepsilon}{9}+\frac{\varepsilon}{9}+\frac{\varepsilon}{9}<\varepsilon$
.
Hence
$$\lim_{r\rightarrow\infty}\frac{1}{h_{r}}  \sum^{}_{k\in{I_{r}}} |f(\alpha _{k+3})-3f(\alpha _{k+2})+3f_{n_{1}}(\alpha_{k+1})-f(\alpha _{k})|=0.$$
This completes the proof of the theorem.
\end{proof}

\begin{theorem} The set of $N_{\theta}^{2}$-ward continuous functions on a subset $A$ of $\mathbb{R}$ is a closed subset of the set of continuous functions on $A$. i.e. $\overline{\Delta^{3}N_{\theta}(A)} = \Delta^{3}N_{\theta}(A)$ where $~ \overline{\Delta^{3}N_{\theta}(A)}$ denotes the set of all cluster points of $\Delta^{3}N_{\theta}(A)$.
\end{theorem}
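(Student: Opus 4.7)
The plan is to reduce this immediately to the uniform limit theorem proved just above (Theorem \ref{Uniformlimitis}). The inclusion $\Delta^{3}N_{\theta}(A) \subseteq \overline{\Delta^{3}N_{\theta}(A)}$ is either trivial or a definitional matter (depending on whether ``cluster points'' is read as the closure, or as the derived set together with the set itself), so the real content is the reverse inclusion $\overline{\Delta^{3}N_{\theta}(A)} \subseteq \Delta^{3}N_{\theta}(A)$.

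To establish the reverse inclusion, I would start with an arbitrary $f \in \overline{\Delta^{3}N_{\theta}(A)}$ and pick, for each positive integer $n$, a function $f_{n} \in \Delta^{3}N_{\theta}(A)$ lying in the ball of radius $1/n$ about $f$ in the sup metric on $A$. The sequence $(f_{n})$ then converges uniformly to $f$ on $A$. Since each $f_{n}$ is $N_{\theta}-\delta^{2}$-ward continuous, Theorem \ref{Uniformlimitis} applies verbatim and yields that $f$ is $N_{\theta}-\delta^{2}$-ward continuous, i.e.\ $f \in \Delta^{3}N_{\theta}(A)$. This gives $\overline{\Delta^{3}N_{\theta}(A)} \subseteq \Delta^{3}N_{\theta}(A)$, and the two inclusions together deliver the claimed equality.

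I do not expect a genuine obstacle here: the heavy lifting (the $\varepsilon/9$ splitting into four uniform-approximation terms plus the $N_{\theta}$-tail term) has already been carried out inside the proof of Theorem \ref{Uniformlimitis}, so the present theorem is essentially a repackaging of that result as a topological closedness statement. The only thing worth being careful about is making explicit which metric/topology is intended on the function space; I would state at the outset that the ambient topology is that of uniform convergence on $A$, so that ``cluster point of $\Delta^{3}N_{\theta}(A)$'' unambiguously means the limit of a uniformly convergent sequence drawn from $\Delta^{3}N_{\theta}(A)$, which is exactly the hypothesis of Theorem \ref{Uniformlimitis}. Once that identification is made, the proof is a two-line invocation.
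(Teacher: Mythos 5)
Your proposal is correct and follows essentially the same route as the paper: take $f$ in the closure, produce a sequence $(f_{n})$ from $\Delta^{3}N_{\theta}(A)$ converging uniformly to $f$, and invoke Theorem \ref{Uniformlimitis}. If anything, you are more careful than the paper's own proof, which writes only $\lim_{n\to\infty} f_{n}=f$ without making explicit that the topology must be that of uniform convergence for the cited theorem to apply.
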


\begin{proof}  Let $f$ be an element in $\overline{\Delta^{3}N_{\theta}(A)}.$ Then there exists a sequence $(f_{n})$ of points in $\Delta^{3}N_{\theta}(A)$ such that $\lim_{n\rightarrow \infty} f_n =f.$
It follows from  Theorem \ref{Uniformlimitis} that $f\in{\Delta^{3}N_{\theta}(A)}$, which completes the proof of the theorem.
\end{proof}
\begin{corollary} The set of $N_{\theta}^{2}$-ward continuous functions on a subset $A$ of $\mathbb{R}$ is a complete subset of the set of continuous functions on $A$.
\end{corollary}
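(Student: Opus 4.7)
The plan is to deduce this corollary directly from the preceding theorem together with the standard fact that a closed subset of a complete metric space is itself complete. Equip the set $C(A)$ of continuous real-valued functions on $A$ with the uniform metric (e.g., $d(f,g)=\sup_{x\in A}\min\{1,|f(x)-g(x)|\}$, or the sup norm when restricting to bounded continuous functions), under which $C(A)$ is known to be a complete metric space. By Corollary \ref{CorNthetadeltawardcontinuityonanintervalimpliesordinarycontinuity}, every $N_{\theta}-\delta^{2}$-ward continuous function on $A$ is continuous, so $\Delta^{3}N_{\theta}(A)\subseteq C(A)$.

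The previous theorem established that $\overline{\Delta^{3}N_{\theta}(A)}=\Delta^{3}N_{\theta}(A)$, i.e.\ $\Delta^{3}N_{\theta}(A)$ is closed in $C(A)$ with respect to uniform convergence. Now I would take an arbitrary Cauchy sequence $(f_n)$ in $\Delta^{3}N_{\theta}(A)$ relative to the uniform metric. Because $C(A)$ is complete, $(f_n)$ converges uniformly to some $f\in C(A)$. Since $\Delta^{3}N_{\theta}(A)$ is closed in $C(A)$, the limit $f$ lies in $\Delta^{3}N_{\theta}(A)$ (alternatively one invokes Theorem \ref{Uniformlimitis} directly to see $f$ is $N_{\theta}-\delta^{2}$-ward continuous). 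Hence every Cauchy sequence in $\Delta^{3}N_{\theta}(A)$ converges within $\Delta^{3}N_{\theta}(A)$, which is precisely completeness.

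There is no substantive obstacle here; the corollary is essentially a one-line consequence of the preceding theorem combined with the elementary topological principle that closed subspaces of complete metric spaces are complete. The only thing one should be mildly careful about is to specify the metric so that $C(A)$ is genuinely complete (taking the bounded uniform metric or the truncated sup metric), after which the argument is immediate.
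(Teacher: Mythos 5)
Your argument is correct and is exactly the route the paper intends: the corollary is stated immediately after the closedness theorem and is meant to follow from it via the standard fact that a closed subset of a complete metric space (here $C(A)$ with the uniform metric, or by invoking Theorem \ref{Uniformlimitis} directly on a uniformly Cauchy sequence) is complete. Your added care in specifying a metric under which $C(A)$ is genuinely complete is a reasonable refinement of what the paper leaves implicit.
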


\section{Conclusion}
In this paper, the concept of an $N_{\theta}^{2}$-quasi-Cauchy sequence is introduced, and investigated. In this investigation, we proved interesting theorems related to $N_{\theta}^{2}$-ward continuity, and some other kinds of continuities. One may expect this investigation to be a useful tool in the field of analysis in modeling various problems occurring in many areas of science, dynamical systems, computer science, information theory, and biological science. For a further study, we suggest to investigate $N_{\theta}^{2}$-quasi-Cauchy sequences of fuzzy point, or soft points  (see \cite{CakalliandPratul}, \cite{ArasandSonmezandCakalliOnSoftMappings}, and \cite{EsiandAcikgozMehmetOnalmostlambdastatisticalconvergenceoffuzzynumbers} for the definitions and  related concepts), and we suggest to investigate $N_{\theta}^{2}$-quasi-Cauchy double sequences (see \cite{CakalliandSavasStatisticalconvergenceofdoublesequencesintopologicalgroups}, \cite{EsiAsymptoticallydoublelacunryequivalentsequencesdefinedbyOrliczfunctions}, \cite{PattersonandCakalliQuasiCauchydoublesequences}, and \cite{KocinacDoubleSequencesandSelections} for the concepts in double case). Another suggestion for another further study is to introduce and give investigations of $N_{\theta}^{2}$-quasi-Cauchy sequences of points in cone normed spaces (see \cite{CakalliandSonmezandGencOnanequivalenceoftopologicalvectorspacevaluedconemetricspacesandmetricspaces}, and \cite{SonmezandCakalliconenormedspacesandweightedmeans}, for basic concepts in topological vector space valued cone metric, and cone normed spaces). However due to the change in settings, the definitions and methods of proofs will not always be analogous to those of the present work.

\section{Acknowledgment}
We acknowledge that some of the results in this paper were presented at the International Conference on Recent Advances in Pure and Applied Mathematics (ICRAPAM 2017) May 11-15, 2017, Palm Wings Ephesus  Resort  Hotel, Kusadasi - Aydin, TURKEY.

\end{document}